\documentclass[11pt]{article}
\usepackage{amsmath}
\usepackage{amssymb, latexsym, amsfonts, amscd, amsthm, mathrsfs, enumerate, esint}
\usepackage[usenames,dvipsnames]{color}
\usepackage[all]{xy}
\usepackage{graphicx}
\usepackage{epsfig}
\usepackage{hyperref}
\usepackage{marginnote}
\usepackage{mathtools} 			
\usepackage{bbm}
\usepackage{amssymb}
\usepackage{amsmath}
\usepackage{amsfonts,latexsym,amstext}
\usepackage{marginnote}
\usepackage[numbers,sort]{natbib}
\usepackage{stmaryrd}
\usepackage{soul}

\numberwithin{equation}{section}

\definecolor{purple}{rgb}{0.9,0,0.8}

\definecolor{gray}{rgb}{0.7,0.7,0.7}

\newtheorem{theorem}{Theorem}

\newtheorem{proposition}{Proposition}

\newcommand{\G}{{\mathcal{G}}}
\newcommand{\Ed}{{\mathcal{E}}}
\newcommand{\V}{{\mathcal{V}}}

\topmargin=0in
\oddsidemargin=0in
\evensidemargin=0in
\textwidth=6.5in
\textheight=8.5in

\usepackage{hyperref}
\newcommand{\footremember}[2]{%
    \footnote{#2}
    \newcounter{#1}
    \setcounter{#1}{\value{footnote}}%
}
\newcommand{\footrecall}[1]{%
    \footnotemark[\value{#1}]%
}

\title{Stability of JSQ in queues with general server-job class compatibilities}

\author{%
James Cruise\footremember{HW}{Heriot-Watt University}%
\and Matthieu Jonckheere\footremember{UBA}{University of Buenos Aires} %
 \and Seva Shneer\footrecall{HW}%
  }

\begin{document}
\maketitle

\abstract{
We consider Poisson streams of exponentially distributed jobs arriving at each edge of a hypergraph of queues.
Upon arrival, an incoming job is rooted to the shortest queue among the corresponding vertices. This generalizes many known models such as power-of-d load balancing
and JSQ (join the shortest queue) on generic graphs. 
 We provide a generic condition for stability of this model.
We show that some graph topologies lead to a loss of capacity, implying more restrictive stability conditions than in, e.g., complete graphs.

\maketitle

\section{Introduction}
Load balancing schemes involving various types of load information have become absolutely essential for the quality of service in many important modern areas of applications: call centers, commercial server farms, scientific computing,
vehicles systems, data centres, and others.


In the last decades, many theoretical results have focused on the case
of parallel servers (i.e. the complete graph model in our setting, see below).
In particular, a lot of attention has been given to mean-field type results for 
complete graphs models, and schemes like join the shortest queue (JSQ) among $n$ available or join the shortest of $d$ among $n$ queues (JSQ(d)), where $n$ is large, started with the seminal work of \cite{mitzenmacher,vvedenskaya96} and complemented by several papers. (This last scheduling is also known as power-of-d).
Transient functional law of large numbers and propagation of chaos for JSQ and JSQ(d) have been obtained for instance in \cite{graham00,Mukho15} for FIFO scheduling.
For general service time distributions, the results are scarcer. 
 For service time distributions with decreasing hazard rate and FIFO service discipline, propagation of chaos properties and asymptotic behaviour of the number of occupied servers were obtained for the JSQ(d) 
policy in \cite{bramson2012}.
 In \cite{foss2016}, the convergence of the mean-field limit of the join-the-idle-queue (JIQ) policy in the stationary regime was proved under light traffic conditions.

More recently, \cite{stolyar2015} obtains mean-field limit for the JIQ, and \cite{mukherjee2016} computes the diffusive limit in the Halfin-Whitt regime for a class of policies of which JIQ and JSQ(d) policies are special cases.  Interestingly they show that JIQ is optimal at this diffusive scale. For the JSQ policy, the large-server heavy-traffic limit was derived in \cite{Eschenfeldt2015}. Different scalings (asymptotic relations between number of servers, loads, buffer sizes) were considered for instance in \cite{jonckheereprabhu18}.

We consider here a spatial generalization of these models
where independent Poisson streams arrive to edges of a hypergraph
and are routed (either statically or dynamically) to one of the vertices
corresponding to the edge.
The well-known case of JSQ(d) can be retrieved simply by considering
a hypergraph where every set of $d$ vertices forms an edge (i.e., a complete $d$-hypergraph).

It is also closely related to a model introduced for instance in \cite{mukherjee2018asymptotically}. 
In their context, each node has an associated arrival process and upon arrival a customer associated with vertex $i$ considers the queue lengths of $i$ and at all its neighbours. It is then allocated to be served by the server at a vertex with the smallest queue length from all those examined. 
This model is also a special case of the one considered here, as we can construct a hypergraph 
with $n$ edges, each edge representing a node in the original graph and all the neighbours of that node in the original graph. Then, arrivals on that edge are equivalent to arrivals at the associated node in the original graph. 
Note that a further extension to this model is considered in \cite{budhiraja2017supermarket}, where a random subset of the neighbours is considered when making the decision about routing rather than the full neighbourhood. Again we can create an equivalent hypergraph model by introducing an edge for each random combination which might be considered.

In the models of \cite{mukherjee2018asymptotically} and \cite{budhiraja2017supermarket} stability is trivial as the state of the system is always dominated, at least in the sense of the maximal queue size, by that in a system where arrivals at a specific node have to be served by the server at that node. The authors are thus interested in the occupancy measures in some growing and/or random topologies. In our more general model stability is non-trivial and we thus study it, in fixed topologies.

It is also worth noting that our model is equivalent to
a bipartite graph between customers arrival processes and servers (i.e. customer arrival processes connected by edges to the servers that can serve the arriving jobs). These models have been referred to as skill-based systems and studied by several papers (see, e.g., \cite{AdanWeiss1}, \cite{AdanWeiss2} and references therein.
The load balancing studied in the aforementioned papers however is join the shortest workload (JSW), which has (for a specific state description) a product form stationary measure, and thus stability condition is easily derived. This is not the case for JSQ networks. Furthermore, even if the JSW discipline has been shown to have the largest stability region (and, in particular, larger than that of the JSQ discipline) in highly symmetric models, like multi-server queues or networks of these queues, (see \cite{Foss1,Foss2}), it is not clear that it can be generalized to asymmetric networks topologies.

\

%
%
%
%
%
%
%
%

The article is organized as follows.
In Section \ref{sec:model}, we precisely define our load balancing model on a hypergraph.
In Section \ref{sec:stabstat}, we derive the stability conditions of static allocations and show that they do not necessarily correspond to a trivial rate conservation condition.
In Section \ref{sec:stabdyn}, we present the main contribution of our paper. Namely, we show that the JSQ load balancing on a hypergraph ensures stability if and only if there is a stable static allocation.

\section{Model}\label{sec:model}

Let $\G= (\V,\Ed)$ be a hypergraph with vertices $\V$ and edges $\Ed$, where each $e \in \Ed$ is a subset of $\V$. Associated with each edge $e \in \Ed$ is a class of customers who arrive as a Poisson process with rate $\lambda_e$. Associated with every vertex $v \in \V$ is a single server, and we denote its queue size at time $t$ by $Q_v(t)$. Each customer served by the server at vertex $v$, irrespective of its arrival class, requires an exponential service with rate $\mu_v$. Customers in the class associated with an edge $e \in \Ed$ can be served by any of the servers at vertices incident with the edge. In other words, customers in class in $e \in \Ed$ can be served by any server at a vertex $v \in e$. Upon arrival, a customer is allocated to a server and joins the relevant queue. The customers in each queue are served using the FIFO discipline. 


We now introduce static and dynamic allocation policies which we analyse.

\subsection{Static allocation}

Associated with each edge $e \in \Ed$ there are probabilities $\{p_{v,e}\}_{v \in e}$ such that $p_{v,e} \ge 0$ for all $v \in e$ and $$\sum_{v \in e} p_{v,e}=1.$$
When a customer arrives upon an edge $e \in \Ed$, it is allocated to a node $v \in e$ with probability $p_{v,e}$, independently of all other arrivals and services. Let $\mathbf{P} = \{\{p_{v,e}\}_{v \in e}\}_{e \in \Ed}$ refer to a given allocation for each edge. 

Therefore the total arrival process at node $v$ is a Poisson process, independent of all other nodes, of rate $$\lambda_v(\mathbf{P})=\sum_{e \in \Ed(v)} p_{v,e} \lambda_e,$$
where $\Ed(v) = \{e \in \Ed: v \in e\}$ is the set of all edges containing node $v$. 

\subsection{Dynamic allocation}

The dynamic allocation aims to load balance across the network by utilising join-the-shortest-queue dynamics. 

Upon an arrival of a customer on an edge $e \in \Ed$, the queue sizes at all nodes $v \in e$ are examined, and the customer is routed to the shortest of these. If there are more than one queues with the smallest size, the customer is routed to any one of them, at random with equal likelihoods.

Note this is a natural definition of join-the-shortest-queue in this setting. 

\section{Stability of static allocations}\label{sec:stabstat}

For static allocations, the queues decouple and the stability condition is straightforward.

\begin{proposition}
A static allocation is stable if and only if
$$  \sum_{e \in \Ed(v)} p_{v,e} \lambda_e < \mu_v$$
for all $v \in \V$, or, alternatively,
$$
\max_{v \in \V} \frac{1}{\mu_v} \sum_{e \in \Ed(v)} p_{v,e} \lambda_e < 1.
$$
\end{proposition}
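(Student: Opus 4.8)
The plan is to exploit the decoupling of the queues under a static allocation and reduce the claim to the classical stability criterion for a single M/M/1 queue.

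First I would show that, under the allocation $\mathbf{P}$, the arrival stream at each node $v$ is a Poisson process of rate $\lambda_v(\mathbf{P}) = \sum_{e \in \Ed(v)} p_{v,e}\lambda_e$, and that these streams are mutually independent across $v \in \V$. This is a direct application of two standard properties of Poisson processes. By the thinning (colouring) theorem, splitting the rate-$\lambda_e$ stream on edge $e$ according to the probabilities $\{p_{v,e}\}_{v\in e}$ produces $|e|$ mutually independent Poisson processes, the one directed to $v$ having rate $p_{v,e}\lambda_e$. By superposition, the arrival process at node $v$, being the sum over $e \in \Ed(v)$ of these independent sub-streams, is Poisson of rate $\sum_{e\in\Ed(v)} p_{v,e}\lambda_e$. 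Independence across nodes follows because sub-streams coming from distinct edges are independent (the edge arrival processes are independent by assumption), and, for a shared edge $e \in \Ed(v)\cap\Ed(w)$, the sub-streams to $v$ and to $w$ are independent by the colouring theorem; combining these, the per-node arrival streams are jointly independent.

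Next, since service times at vertex $v$ are i.i.d.\ exponential with rate $\mu_v$, independent of everything else, the process $(Q_v(t))_{v\in\V}$ is a continuous-time Markov chain whose coordinates evolve as independent M/M/1 queues, the $v$-th having arrival rate $\lambda_v(\mathbf{P})$ and service rate $\mu_v$. Interpreting ``stable'' as positive recurrence (ergodicity) of this chain, the product chain is positive recurrent if and only if each coordinate is. I would then invoke the standard dichotomy for the M/M/1 queue: with arrival rate $\lambda$ and service rate $\mu$, the queue length is positive recurrent iff $\lambda < \mu$, and is null recurrent or transient otherwise, in which case $Q_v(t)\to\infty$ and the system is not stable. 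Applying this coordinatewise gives stability iff $\lambda_v(\mathbf{P}) < \mu_v$ for every $v \in \V$; the ``alternatively'' formulation is the immediate rewriting $\max_{v\in\V} \mu_v^{-1}\sum_{e\in\Ed(v)} p_{v,e}\lambda_e < 1$.

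There is no serious obstacle here: the result is essentially a packaging of Poisson thinning and superposition together with M/M/1 ergodicity. The only points demanding a little care are the joint independence of the per-node arrival streams when edges overlap (handled by the colouring theorem) and fixing precisely the notion of stability, after which the ``if'' direction uses the explicit geometric stationary law of each M/M/1 queue and the ``only if'' direction uses transience or null recurrence of a single overloaded queue.
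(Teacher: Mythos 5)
Your proposal is correct and follows essentially the same route as the paper: decoupling into independent per-node Poisson arrival streams via thinning and superposition, then applying the M/M/1 stability criterion $\lambda_v(\mathbf{P})<\mu_v$ at each vertex. You simply spell out the independence and the positive-recurrence dichotomy in more detail than the paper's brief argument.
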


\begin{proof}
For the static allocation we know that for a given allocation $\mathbf{P}$ the arrival process at each vertex is an independent Poisson process with arrival rate $$\lambda_v(\mathbf{P})=\sum_{e \in \Ed(v)} p_{v,e} \lambda_e.$$
From this we have the stability condition associated with each node is $$\sum_{e \in \Ed(v)} p_{v,e} \lambda_e < \mu_v,$$
so that the stability condition for the whole system follows.
\end{proof}

While the previous result concerns a single possible allocation, we now consider the best possible allocation and the maximal stability region of the graph. 
The stability region for a given graph is maximized by minimizing over the possible allocations, as shown in the following.

\begin{proposition}
There exists a stable static allocation if and only if
$$\min_{\mathbf{P}}\left(\max_{v \in \V} \frac{1}{\mu_v} \sum_{e \in \Ed(v)} p_{v,e} \lambda_e\right) < 1.$$
\end{proposition}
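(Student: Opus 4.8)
The plan is to deduce the statement directly from the first Proposition of this section, the only genuine point being to make sense of the minimum. First I would observe that the set of feasible allocations
\[
\mathcal{P} = \Big\{ \mathbf{P} = \{\{p_{v,e}\}_{v\in e}\}_{e\in\Ed} : p_{v,e}\ge 0,\ \textstyle\sum_{v\in e} p_{v,e} = 1 \ \text{for all } e\in\Ed \Big\}
\]
is a finite product of probability simplices, hence a nonempty compact subset of a Euclidean space, and that the map
\[
f(\mathbf{P}) = \max_{v\in\V} \frac{1}{\mu_v}\sum_{e\in\Ed(v)} p_{v,e}\lambda_e
\]
is continuous on $\mathcal{P}$, being the pointwise maximum of finitely many affine functions of $\mathbf{P}$. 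By the extreme value theorem the infimum of $f$ over $\mathcal{P}$ is attained; fix a minimizer $\mathbf{P}^\ast$, so that $\min_{\mathbf{P}}\left(\max_{v\in\V} \frac{1}{\mu_v} \sum_{e\in\Ed(v)} p_{v,e}\lambda_e\right)$ equals $f(\mathbf{P}^\ast)$.

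For the ``if'' direction, suppose $f(\mathbf{P}^\ast) < 1$. Then $\frac{1}{\mu_v}\sum_{e\in\Ed(v)} p^\ast_{v,e}\lambda_e < 1$ for every $v\in\V$, which is exactly the stability criterion of the first Proposition; hence $\mathbf{P}^\ast$ is a stable static allocation, and in particular a stable static allocation exists. Conversely, suppose some static allocation $\mathbf{Q}$ is stable. By the first Proposition this forces $f(\mathbf{Q}) < 1$. Since $\mathbf{P}^\ast$ minimizes $f$ over $\mathcal{P}$ and $\mathbf{Q}\in\mathcal{P}$, we obtain $\min_{\mathbf{P}} f(\mathbf{P}) = f(\mathbf{P}^\ast) \le f(\mathbf{Q}) < 1$, which is the claimed inequality.

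I do not expect a substantial obstacle: the argument is essentially a one-line consequence of the preceding Proposition together with a compactness observation. The only place that requires a modicum of care is the justification that the minimum is well-defined, i.e.\ attained, which (as noted) follows from continuity of $f$ — a finite maximum of affine maps — on the compact set $\mathcal{P}$. Alternatively, one could sidestep attainment altogether and phrase everything in terms of the infimum, using that $\inf_{\mathbf{P}} f < 1$ already produces some feasible $\mathbf{P}$ with $f(\mathbf{P}) < 1$, which is all that is needed for the ``if'' direction.
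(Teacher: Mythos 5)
Your argument is correct and is essentially the (omitted) one-line deduction the paper intends: the statement is presented as an immediate corollary of the preceding proposition, with no proof given. Your added compactness observation — that $f$ is a finite maximum of affine functions on a product of simplices, so the minimum is attained — is a sound and appropriate way to justify writing $\min$ rather than $\inf$, and the rest follows exactly as you say.
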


\subsection{Particular case: symmetric system}

In this subsection we consider a particular case of our general setting where all customer classes have the same arrival intensity and all jobs require service times with the same distribution. More precisely, $\lambda_e = \lambda$ for all $e \in \Ed$ and $\mu_v = \mu$ for all $v \in \V$. The general stability condition thus reduces to the requirement
$$
\lambda\left(\min_{\mathbf{P}}\left(\max_{v \in \V} \sum_{e \in \Ed(v)} p_{v,e}\right)\right) < \mu.
$$

Note that practically, the maximal arrival rate characterizing the optimal static stability condition in this case can be computed as
$\lambda^* = {\frac{\mu}{z^*}}$ where $z^*$ is
 the solution of the following linear program:
\begin{align*}
& \min_{p,z}  z,\\
&\sum_{e \in \Ed(v)} p_{v,e} \le z, \forall v \in \V,\\
 & 0 \le p_{e,v},\\
 & \sum_{v \in e} p_{e,v}= 1, \forall e \in \Ed.
\end{align*}

\

It is worth noting that if there exists an allocation $P$ which equalizes the $\left(\sum_{e \in \Ed(v)} p_{v,e}\right)$ over all vertices $v$, then there is no loss in stability region due to the restrictions imposed by the graph structure, i.e. the maximum possible total arrival rate into the network is equal to the total service rate of the network. Indeed, as all the values of $\left(\sum_{e \in \Ed(v)} p_{v,e}\right)$ are equal, each of them is necessarily equal to $|\V|/|\Ed|$, where $|\Ed|$ is the number of edges and $|\V|$ is the number of vertices. The stability condition hence reads $\lambda |\Ed| < \mu |\V|$, which is exactly the requirement that the total arrival rate is smaller than the total service rate. In this case we obtain complete resource pooling in the sense of stability (but possibly in a weaker sense than state space collapse).

An interesting question thus arises: can we understand what properties of the graph enable us to find a balanced (i.e. maximal stable in terms of rate conservation) allocation, and when it is not possible?
We partially answer this question in the next section.

\subsection{Addition of edges can lead to smaller stability region}

To better understand the question posed above, we provide three revealing examples: firstly two extreme cases where balance is always achievable and then an example where balance is not achievable and we do observe a loss of capacity. We focus on standard graphs in this section.

Let us consider two extreme graphs on $n$ vertices, the circle and the complete graph. In both cases the allocation of $(1/2, 1/2)$ on every edge balances the loads and enables the maximum stability regions in these cases.

For an example where balance can not be obtained, consider a graph containing $2k$ vertices for $k>2$ and separate them into two groups of $k$ vertices. The first $k$ vertices form a clique. The remaining $k$ vertices are then leaves in a graph connected to a single node in the clique and each node in the clique is connected to a single leaf. It is not difficult to see that the best allocation you can achieve here is to equalize across the clique and then on all leaf edges, to allocate all the traffic to the leaf vertex. This gives the following pair of stability constraints:
$$ \lambda < \mu \textrm{ and } \frac{k-1}{2} \lambda < \mu.$$
Note that for $k>3$ the first condition is superfluous, so the maximum arrival rate per edge is $\frac{2\mu}{k-1}$. Since there are $k(k+1)/2$ edges in this graph the maximum total stable arrival rate is 
$$ \frac{k(k+1)}{(k-1)} \mu,$$
which is substantially below the total service rate of $2k \mu$.

\



\section{Stability of dynamic allocations} \label{sec:stabdyn}

We now turn to our main result which characterizes the stability of dynamic allocations in terms of the maximal static stability condition.

\begin{theorem}
The dynamic allocation is stable if and only if the maximal static allocation is stable, i.e.,
$$\min_{\mathbf{P}}\left(\max_{v \in \V} \frac{1}{\mu_v} \sum_{e \in \Ed(v)} p_{v,e} \lambda_e\right) < 1.$$
\end{theorem}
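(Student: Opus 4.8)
The plan is to prove the two directions separately. The ``only if'' direction is the easy one: if there is no stable static allocation, then $\min_{\mathbf{P}}\left(\max_{v \in \V} \frac{1}{\mu_v} \sum_{e \in \Ed(v)} p_{v,e} \lambda_e\right) \ge 1$, and I would argue that the total work arriving to any sub-collection of servers must be served by those servers, so a cut/flow argument (Hall-type condition on the associated bipartite graph) produces a set of vertices whose servers are overloaded regardless of routing; since JSQ is just one particular (state-dependent) routing, it too must be unstable on that set. More carefully, I would fix the set $S$ of vertices achieving the min-max bottleneck and note that any edge $e$ entirely contained in $S$ contributes all its traffic $\lambda_e$ to servers in $S$, so the arrival rate into $\sum_{v\in S}Q_v$ is at least $\sum_{e\subseteq S}\lambda_e \ge \sum_{v\in S}\mu_v$ by LP duality (complementary slackness for the linear program defining $z^*$), forcing that aggregate queue to be transient.

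For the ``if'' direction — the substantial one — I would use a fluid-limit / Lyapunov argument. Assume a stable static allocation $\mathbf{P}^*$ exists with $\max_v \frac{1}{\mu_v}\lambda_v(\mathbf{P}^*) = \rho < 1$. Consider the Markov process $Q(t) = (Q_v(t))_{v\in\V}$, take a sequence of initial states with $|Q(0)|\to\infty$, rescale space and time by $|Q(0)|$, and extract fluid limit trajectories $q(t)$ that are absolutely continuous and satisfy fluid dynamics. The goal is to show every fluid limit hits $0$ in finite time (or at least that $|q(t)|$ decreases at a uniform rate while positive), which by the standard Dai / Stolyar criterion gives positive recurrence. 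The natural Lyapunov function is $\|q\|_\infty = \max_v q_v(t)/\mu_v$ or a smooth majorant thereof; the key structural fact to exploit is that under JSQ, an arrival on edge $e$ always goes to a shortest queue among $v\in e$, so whenever $q_v(t) = \max_{u\in e} q_u(t)$ (after scaling, the ``top'' coordinate), edge $e$ sends \emph{no} fluid to $v$ as long as some other vertex in $e$ is strictly below it. Hence at a point where the set $M = \{v : q_v/\mu_v \text{ is maximal}\}$ is reached, the only edges feeding $M$ are those with $e\subseteq M$, and within $M$ JSQ further balances — so the total inflow rate into $M$ is at most $\sum_{e\subseteq M}\lambda_e$, while the outflow is $\sum_{v\in M}\mu_v$. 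I would then show $\sum_{e\subseteq M}\lambda_e < \sum_{v\in M}\mu_v$, which is exactly where the existence of the stable static allocation $\mathbf{P}^*$ enters: summing $\lambda_v(\mathbf{P}^*) < \mu_v$ over $v\in M$ and using that edges $e\subseteq M$ contribute their full $\lambda_e$ to $\sum_{v\in M}\lambda_v(\mathbf{P}^*)$ while edges straddling $M$ contribute nonnegatively, gives $\sum_{e\subseteq M}\lambda_e \le \sum_{v\in M}\lambda_v(\mathbf{P}^*) < \sum_{v\in M}\mu_v$. Therefore $\frac{d}{dt}\max_v \frac{q_v(t)}{\mu_v} \le -\delta < 0$ whenever the maximum is positive (interpreted via the appropriate one-sided/Dini derivative and a standard argument that the maximal coordinate set's aggregate workload drains), so the fluid model is stable and hence the Markov chain is positive recurrent.

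The main obstacle I anticipate is making the fluid-limit argument rigorous at the level of the ``top set'' $M$: the set of maximal coordinates can change over time, coordinates can enter and leave $M$, and JSQ's tie-breaking among near-equal queues means one must track not just $M$ but a neighborhood of near-maximal queues and argue that, on the fluid scale, queues that are strictly below the max receive all the redirected traffic and so rise to meet it, after which the whole block $M$ drains together. The clean way to handle this is probably to work with $\phi(q) = \max_v q_v/\mu_v$ directly, show it is Lipschitz, and bound its upper Dini derivative along fluid trajectories by $\max_{M}\left(\frac{1}{\sum_{v\in M}\mu_v}\sum_{e\subseteq M}\lambda_e - 1\right)\cdot(\text{something positive})$, being careful that an edge $e$ with $e\cap M \ne \emptyset$ but $e\not\subseteq M$ genuinely sends zero fluid to $M$ because JSQ routes strictly away from the maximum — this requires a short argument that in the fluid limit the fraction of such arrivals going to $M$ vanishes. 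A secondary technical point is establishing existence and regularity of fluid limits (tightness, Lipschitz continuity of limit paths, the a.e. differentiability needed to run the Lyapunov argument), which is routine for this class of models but should be stated; I would cite the standard framework (e.g. Dai's theorem and Rybko–Stolyar) rather than redo it. If a direct fluid argument proves awkward, a fallback is a fluid-limit-free Lyapunov/drift argument on the discrete chain using $\sum_v e^{\theta Q_v/\mu_v}$ or a piecewise-linear test function adapted to the bottleneck structure, but I expect the fluid approach to be cleanest.
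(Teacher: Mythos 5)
Your roadmap is workable in outline but misses the one idea that makes this theorem a two-line Lyapunov computation, and both of your directions have concrete gaps. For sufficiency, the paper does not use fluid limits at all: it takes the quadratic function $L(\bar x)=\sum_v x_v^2$ and observes that under JSQ an arrival on edge $e$ joins a queue of length $\min_{v\in e}x_v \le \sum_{v\in e}p_{v,e}x_v$ for \emph{any} probability vector $\{p_{v,e}\}$, in particular for the stable static allocation $\mathbf{P}$. Plugging this into the drift of $L$ immediately yields drift at most $2\sum_v x_v\bigl(-\mu_v+\sum_{e\in\Ed(v)}\lambda_e p_{v,e}\bigr)+c\le -2\varepsilon\sum_v x_v+c$, and Foster--Lyapunov finishes. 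Your fluid argument with $\phi(q)=\max_v q_v/\mu_v$ has a genuine problem you did not flag: JSQ compares raw queue lengths $q_v$, while your top set $M$ is defined via the normalized quantities $q_v/\mu_v$. With heterogeneous $\mu_v$ these orderings disagree, so an edge straddling $M$ can perfectly well route its fluid \emph{into} $M$ (a vertex in $M$ with small $\mu_v$ has small $q_v$ and is the JSQ minimum). Your key claim that straddling edges send zero fluid to $M$ is therefore false as stated; and even with $M$ redefined via raw $q_v$, controlling $\max_{v\in M}\dot q_v$ rather than the aggregate $\sum_{v\in M}\dot q_v$ requires exactly the within-$M$ balancing argument you defer. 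The convex-combination inequality sidesteps all of this.

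For necessity, your LP-duality/cut argument also has a boundary gap: when the min-max equals $1$ exactly, the bottleneck set $S$ satisfies $\sum_{e\subseteq S}\lambda_e \ge \sum_{v\in S}\mu_v$ only with possible equality, and a zero-drift aggregate queue is not obviously non-positive-recurrent without further work (null recurrence must be established, not just transience). The paper's argument avoids this entirely and is much shorter: if the dynamic system is stable, the stationary routing frequencies $\pi_{v,e}$ themselves constitute a static allocation, and the stationary flow balance $\sum_{e\in\Ed(v)}\lambda_e\pi_{v,e}=\mu_v\,\prob(Q_v>0)<\mu_v$ shows that this allocation is strictly stable. You should internalize both tricks: stability of any policy hands you a feasible static allocation for free via its stationary routing fractions, and JSQ's pathwise optimality is captured by the single inequality $\min_{v\in e}x_v\le\sum_{v\in e}p_{v,e}x_v$ inside a quadratic Lyapunov function.
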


\begin{proof}

\

{\it Necessary condition.}


 Suppose the dynamic allocation is stable. Then there exist stationary probabilities, $\pi_{v,e}$ say, for a customer arriving at edge $e$ to be routed to vertex $v \in e$. Since the network is stationary, rate stability implies that
$$ \lambda \sum_{e \in \in \Ed(v)}\pi_{v,e} =  \mu_v P(X_v >0).$$
In particular:
$$ \lambda \sum_{e \in \in \Ed(v)}\pi_{v,e} <  \mu_v.$$
 Hence the collection $\{\{\pi_{v,e}\}_{v \in e}\}_{e \in \Ed}$ clearly forms a stable static allocation.

\

\

{\it Sufficient condition.}

Assume there exist a stable static allocation $\mathbf{P}$:
$$
\sum_{e \in \Ed(v)} p_{v,e} \lambda_e < \mu_v
$$
for all $v \in \V$. As there is a finite number of vertices, fix $\varepsilon > 0$ such that
\begin{equation} \label{eq:gap}
\sum_{e \in \Ed(v)} p_{v,e} \lambda_e - \mu_v < -\varepsilon
\end{equation}
for all $v \in \V$.

Consider now the system with dynamic allocations (with arbitrary tie breaks) and consider the Lyapunov function
$$
L(\bar{x}) = \sum_{v \in V} x_v^2.
$$
We know that at rate $\mu_v$ there is a departure from node $v$, and at rate $\lambda_e$ there is an arrival at edge $e$, which will then go to the minimal adjacent queue. Therefore, conditioned on the current queue lengths being $(x_v)_{v \in \V}$, the drift of the Lyapunov function is equal to
$$
- 2 \sum_{v \in \V} \mu_v x_v + 2 \sum_e \lambda_e \min_{v \in e} x_v + c,
$$
where $c$ is a finite constant. Note now that
$$
\min_{v \in e} x_v \le \sum_{v \in e} p_{v,e} x_v,
$$
as $\sum_{v \in e} p_{v,e}=1$. The drift can then be bounded from above by
\begin{align*}
& - 2 \sum_{v \in \V} \mu_v x_v + 2 \sum_e \lambda_e \sum_{v \in e} p_{v,e} x_v + c = - 2 \sum_{v \in \V} \mu_v x_v + 2 \sum_{v \in \V} \sum_{e \in \Ed(v)}  \lambda_e p_{v,e} x_v + c \\
& =2 \sum_{v \in \V} x_v (- \mu_v + \sum_{e \in \Ed_v} \lambda_e p_{v,e}) + c \le - 2 \varepsilon \sum_{v \in \V} x_v + c,
\end{align*}
thanks to \eqref{eq:gap}. The drift is therefore smaller than $-\delta < 0$ as long as $\sum_{v \in \V} x_v > (c+\delta)/(2 \varepsilon)$, which is sufficient for stability.

\end{proof}

\section{Conclusion}
We provided necessary and sufficient conditions of stability for a model of load balancing on fixed hypergraphs that generalize most previous models in the literature.
Interesting and difficult challenges consist in characterizing these conditions for large classes of random graphs.

\section*{Acknowledgements}
The authors are grateful to the associate editor for their careful reading of the paper and useful comments and suggestions, especially for brining to our attention relevant work on skill-based routing.

\bibliographystyle{plain}
\bibliography{TheBib}

\end{document}